\newtheorem{df}{Definition}[section]
\newtheorem{thm}[df]{Theorem}
\newtheorem{pro}[df]{Proposition}
\begin{document}
\setcounter{page}{1}

\title[Lie algebras of Operators]{Tensor Products and Joint Spectra \\  for solvable Lie Algebras of 
operators }
\author{Enrico Boasso}

\begin{abstract} Given two complex Hilbert spaces, $H_1$ and $H_2$, and two complex solvable finite dimensional Lie
algebras of operators, $L_1$ and $L_2$, such that $L_i$ acts on $H_i$ (i= 1,2), the
joint spectrum of the Lie algebra $L_1\times L_2$, which acts on $H_1\overline\otimes H_2$, is expressed by
the cartesian product of $Sp(L_1,H_1)$ and $Sp(L_2,H_2)$.\end{abstract}
\maketitle
\section{ Introduction}
\indent J. L. Taylor developed in [6] a notion of joint spectrum for an $n$-tuple
$a$, $a=(a_1,...,a_n)$, of mutually commuting operators acting on a Banach space
$E$, i.e., $a_i\in{\mathcal L} (E)$, the algebra of all bounded linear operators on
$E$, and $[a_i,a_j] =0$, $1\le i, j\le n$. This interesting notion, which
extends in a natural way the spectrum of a single operator, has many important
properties, among then, the projection property and the fact that $Sp(a,E)$ is
a compact non empty subset of $\Bbb C^n$, where $Sp(a,E)$ denotes the joint spectrum
of $a$ in $E$.\par
\indent One of the most remarkable results of the Taylor joint spectrum is the one 
related with tensor products of tuples of operators. For example, in [2], Z.
Ceausescu and F. H. Vasilescu proved the following result. 
Let $H_i$, $1\le i\le n$, be complex Hilbert spaces, and $a_i $, $1\le i\le n$,
be bounded linear operators defined on $H_i$, respectively, $1\le i\le n$. If we denote 
by $H$ the completion of the tensor product $H_1\otimes \ldots\otimes H_n$
with respect to the canonical scalar product, we may consider the $n$-tuple
of operators $\tilde a$, $\tilde a= (\tilde a_1,\ldots ,\tilde a_n)$, where
$\tilde a_i = 1\otimes\ldots\otimes 1\otimes a_i\otimes 1\otimes\ldots\otimes 1$, $1\le i\le n$, and 
1 denotes the identity of the corresponding spaces. Then the following identity holds,
$$
Sp(a,E)=Sp(a_1)\times\ldots\times Sp(a_n).
$$
\indent Furthermore, in [3], Z. Ceausescu and F. H. Vasilescu showed that if
$H_1$ (resp. $H_2$) is a complex Hilbert space and $a=(a_1,\dots ,a_n) $, (resp. $b=(b_1,\dots ,b_m)$), is a mutually commuting tuple of operators
acting on $H_1$, (resp. $H_2$), then, the commuting tuple $(\tilde a ,\tilde b )=
(a_1\otimes 1,\ldots ,a_n\otimes 1,1\otimes b_1,\ldots ,1\otimes b_m)$
in  ${\mathcal L} (H_1\overline\otimes H_2)$, satisfies the relation,
$$
Sp((\tilde a,\tilde b), H_1\overline\otimes H_2)= Sp(a,H_1)\times Sp(b, H_2),
$$
where 1 denotes the identity map of the corresponding Hilbert spaces, and $H_1\overline\otimes H_2$ is
the completion of the tensor product $H_1\otimes H_2$ with respect to the canonical scalar product, see [3,Theorem 2.2].\par
\indent In [1] we defined a joint spectum for  complex solvable finite dimensional Lie algebras of operators $L$, acting on a Banach space $E$, and
we denoted it by $Sp(L,E)$. We  proved that $Sp(L,E)$ is a compact non
empty subset of $L^* $ and that the projection property for ideals still holds. Besides, when  $L$
is a commutative algebra, our spectrum reduces to Taylor joint spectrum in the following sense. If $\dim L =n$,
$\{a_i\}_{(1\le i\le n)}$ is a basis of $L$ and we consider the $n$-tuple
$a=(a_1,\ldots ,a_n)$, then $\{ (f(a_1),\ldots ,f(a_n)): f\in Sp(L,E)\}= Sp(a,E)$,
i.e., $Sp(L,E)$ in terms of the basis of $L^*$ dual of $\{a_i\}_{(1\le i\le n)}$
coincides with the Taylor joint spectrum of the $n$-tuple $a$. 
Then, the following question arises naturally. If $H_i$, $i= 1, 2$, are two complex  Hilbert spaces,
and $L_i$, $i= 1, 2$, are two complex solvable finite dimensional Lie algebras of 
operators such that $L_i$ acts on $H_i$, respectively, $ i= 1, 2$, is there any relation between 
$Sp(L_1\times L_2, H_1\overline\otimes H_2)$ and $ Sp(L_1,H_1)\times S(L_2,H_2)$. \par
\indent In this paper we answer this question in the affirmative. Moreover, 
by a refinement of the argument of Z. Ceausescu and F. H. Vasilescu in
[3], we extend the main results of [2] and [3] for complex solvable finite dimensional 
Lie algebras and its joint spectrum. In order to describe in more detail our main theorem we need to introduce a definition. 
If $H_i$ and  $L_i$ are as above, $i= 1, 2$,      
we consider the  direct product of $L_1$ and $L_2$, i.e., the complex solvable finite dimensional Lie algebra $L_1\times L_2$ defined by,
$$
L_1\times L_2 =\{ x_1\otimes 1+ 1\otimes x_2: x_i \in L_i, i=1,2\},
$$
where 1 is as above. Then, its is clear that $L_1\times L_2$ is a Lie algebra of operators which acts on $H_1\overline\otimes H_2$, and our main theorem may be stated as follows, 

$$
Sp(L_1\times L_2, H_1\overline\otimes H_2) = Sp(L_1,H_1)\times Sp(L_2, H_2),
$$
where the above sets are considered as subsets of $(L_1\times L_2)^*$ under
the natural identification $(L_1\times L_2)^*\cong L_1^*\times L_2^*$.\par
\indent The paper is organized as follows. In Section 2 we review several definitions
and results of [1] and we also prove a proposition which is an important steps
to our main result. Finally, in Section 3, we prove our main theorem.\par
\section{Preliminaries}
\indent We briefly recall several definitions and results related to the spectrum
of a complex solvable Lie algebra of operators, see [1]. From now on $L$ denotes a complex solvable finite dimensional Lie
algebra and $H$  a complex Hilbert space on which $L$ acts as right continous operators,
i.e., $L$ is a Lie subalgebra of ${\mathcal L}(H)$ with the opposite product. 
If $\dim L =n$ and  $f$ is a character of $L$, i.e., $f\in L^*$ and $f(L^2) = 0$, where $L^2 = \{ [x,y]: x, y \in L\}$, then consider 
the following chain complex, $(H\otimes\wedge L, d(f))$, where $\wedge L$ denotes
the exterior algebra of $L$, and $d_p(f)$ is as follows,
$$
d_p (f)\colon H\otimes\wedge^p L\rightarrow H\otimes\wedge^{p-1} L,   
$$

\begin{align*} d_p (f) e\langle x_1\wedge\dots\wedge x_p\rangle & = \sum_{k=1}^{k=p}(-1)^{k+1}e(x_k-f(x_k))\langle x_1\wedge\ldots\wedge\hat{x_k}\wedge\dots\wedge x_p\rangle\\
                                                         + &\sum_{1\le k< l\le p} (-1)^{k+l}e\langle [x_k, x_l]\wedge x_1\wedge\ldots\wedge\hat{x_k}\wedge\ldots\wedge\hat{x_l}\wedge\ldots \wedge x_p\rangle ,\\ \end{align*} 

\noindent where $\hat{ }$ means deletion. If $p\le 0$ or $p\ge {n+1}$, we  define $d_p (f) =0$.\par
\indent  If we denote by $H_*((H\otimes\wedge L,d(f)))$ the homology of the complex
$(H\otimes\wedge L, d(f))$, we may state our first definition.\par
\begin{df} With $L$ and $f$ as above, the
set $\{f\in L^*: f(L^2) =0, H_*((H\otimes\wedge L,d(f)))\neq 0\}$ is the joint spectrum
of $L$ acting on $H$, and it is denoted by $Sp(L,H)$.\end{df}
\indent As we have said, in [1] we proved that $Sp(L,E)$ is a compact non empty subset of $L^*$ which 
reduces to Taylor joint spectrum when $L$ is a commutative algebra, in the sense 
explained in the introduction. Besides,
if $I$ is an ideal of $L$ and $\pi$ denotes the projection map from $L^*$ to $I^*$, then,
$$
Sp(I,H) = \pi (Sp(L,H)),
$$
i.e., the projection property for ideals still holds.
 With regard to this property, we ought to mention the paper of C. Ott, see [5],
who pointed out a gap in the proof of this result, and give another proof of it.
In any case, the projection property remains true.\par
\indent We observe that  the set $H\otimes\wedge L$ has a natural structure
of Hilbert space, so that the sets $H\otimes \langle x_{i_1}\wedge\dots \wedge x_{i_p}\rangle$, 
$1\le i_1<\ldots <i_p\le n$, $0\le p\le n$,
are orthogonal subspaces of $H\otimes\wedge L$, and if $<,>$ denotes
the inner product of $H$, $<a\langle x_{i_1}\wedge\ldots \wedge x_{i_p}\rangle, b\langle  x_{i_1}\wedge\ldots\wedge x_{i_p}\rangle> =<a,b>$.\par
\indent We shall have occasion to use the direct product of two complex solvable 
finite dimensional Lie algebras and its action on the tensor product of two 
complex Hilbert spaces. We recall here the main facts which we need for our work.  
If $H_i$, i =1,2, are two complex Hilbert spaces, then $H_1\overline\otimes H_2$ 
denotes the completion of the tensor product $H_1\otimes H_2$ with respect to 
the canonical scalar product. Now, if $L_i$, i=1,2, are two complex solvable finite 
dimensional Lie algebras of operators, such that $L_i$ acts on $H_i$, respectively, i=1,2, 
we consider the algebra  $L_1\times L_2$, the direct 
product of $L_1$ and $L_2$, which acts in a natural way on  $ H_1\overline\otimes H_2$, and it is defined by, 
$$
L_1\times L_2 =\{ x\otimes 1+ 1\otimes y: x\in L_1+ y\in L_2\},
$$
where $1$ denotes the identity of the corresponding space.\par
\indent It is clear that $L_1\times L_2$, defined as above, is a complex solvable
 finite dimensional Lie subalgebra of ${\mathcal L} (H_1\overline\otimes H_2)$.
Moreover, by the structure of the Lie bracket in $L_1\times L_2$, we have two distinguished ideals,
$L'_1$ and $L'_2$, which we define as follows,
$$
L^{'}_1 =\{x\otimes 1: x\in L_1\},
\hskip3cm
L_2^{'} = \{1\otimes y: y\in L_2\}.
$$
\indent In addition, if we consider the natural identification $\tilde{K}:(L_1\times L_2)^*\cong L_1^*\times L_2^*$,
$\tilde{K}(f)=(f\circ i_1,f\circ i_2)$, where $f\in (L_1\times L_2)^*$ and $i_j\colon L_j\to L_1\times L_2$, $j=1,2$,
are the canonical inclusions, as $(L_1\times L_2)^2=L_1^2\times L_2^2$, we have that the set of characters of
$L_1\times L_2$ is the cartesian product of the sets of characters of $L_1$ and $L_2$.\par
\indent The following proposition is an important step to our main theorem.
\begin{pro} Let $H_i$, $i= 1,2$, be two complex Hilbert spaces, and $L$ a complex  solvable 
finite dimensional Lie algebra of operators acting on $H_1$. Let $L{'}_1$, (resp. $L{'}_2$),
be the Lie algebra of operators $\{ x\otimes 1: x\in L\}$, (resp. $\{1\otimes x: x\in L\}$), which acts on $H_1\overline \otimes H_2$, (resp. $H_2\overline\otimes H_1$),
where 1 denotes the identity map of $H_2$, (resp. $H_1$). Then,\par
\noindent {\rm (i)} $ Sp(L_1^{'}, H_1\overline\otimes H_2) = Sp(L^{'}_2, H_2\overline\otimes H_1)$,\par
\noindent {\rm (ii)} $ Sp(L^{'}_1, H_1\overline\otimes H_2)\subseteq Sp(L,H_1)$,\par
\noindent {\rm (iii)} $ Sp(L^{'}_2, H_2\overline\otimes H_1)\subseteq Sp(L,H_1)$.\end{pro}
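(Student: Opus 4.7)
The plan rests on two ideas: the Koszul complex of $L'_1$ on $H_1\overline\otimes H_2$ is isometrically isomorphic to the Koszul complex of $L$ on $H_1$ tensored (over Hilbert spaces) with the identity on $H_2$, and a finite-length algebraically exact complex of Hilbert spaces with bounded differentials admits a bounded contracting homotopy. Throughout, the Lie algebra isomorphisms $L\to L'_i$ ($i=1,2$) given by $x\mapsto x\otimes 1$ and $x\mapsto 1\otimes x$ induce bijections on character spaces, and the equalities and inclusions in (i)-(iii) are to be read under these identifications.

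For part (i), I would use the flip unitary $\sigma:H_1\overline\otimes H_2\to H_2\overline\otimes H_1$ determined by $\sigma(v\otimes w)=w\otimes v$. Since $\sigma(x\otimes 1)=(1\otimes x)\sigma$ for every $x\in L$, the map $\sigma\otimes\mathrm{Id}_{\wedge L}$ intertwines the Koszul complexes for $L'_1$ and $L'_2$ at corresponding characters, giving an isomorphism of homologies and hence the claimed equality of spectra.

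For part (ii), I would fix a character $f$ of $L$ with image $\tilde f$ in $(L'_1)^*$. The canonical identification $(H_1\overline\otimes H_2)\otimes\wedge^p L'_1\cong (H_1\otimes\wedge^p L)\overline\otimes H_2$ transports the differential $d_p(\tilde f)$ to $d_p(f)\otimes\mathrm{Id}_{H_2}$, because $(x\otimes 1)-\tilde f(x\otimes 1)=(x-f(x))\otimes 1$ and $[x\otimes 1,y\otimes 1]=[x,y]\otimes 1$ match the corresponding terms of $d(f)$. Assume $f\notin Sp(L,H_1)$: then $(H_1\otimes\wedge L,d(f))$ is algebraically exact, so each $\ker d_p(f)=\operatorname{im} d_{p+1}(f)$ is closed in the Hilbert space $H_1\otimes\wedge^p L$. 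The restriction of $d_{p+1}(f)$ to $(\ker d_{p+1}(f))^\perp$ is then a bounded bijection onto $\ker d_p(f)$, hence by the open mapping theorem a topological isomorphism; extending its bounded inverse by zero on $(\ker d_p(f))^\perp$ yields a bounded map $s_p:H_1\otimes\wedge^p L\to H_1\otimes\wedge^{p+1} L$ satisfying $d_{p+1}(f)s_p+s_{p-1}d_p(f)=\mathrm{Id}$. Each $s_p\otimes\mathrm{Id}_{H_2}$ extends continuously to the completed tensor product and provides a contracting homotopy for the Koszul complex of $L'_1$ on $H_1\overline\otimes H_2$ at $\tilde f$, forcing the homology to vanish and hence $\tilde f\notin Sp(L'_1,H_1\overline\otimes H_2)$.

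Part (iii) is then immediate by composing (i) with (ii). The main technical hurdle is the construction of the bounded contracting homotopy in part (ii): one must observe that algebraic exactness automatically yields closed range of the differentials (because each $\ker d_p(f)$ is closed), so that the open mapping theorem applies, and then verify that the homotopy persists after tensoring with the identity on $H_2$ and extending to the Hilbert completion. The remaining manipulations are formal matters of bookkeeping on Koszul differentials.
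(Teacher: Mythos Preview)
Your argument follows the paper's proof essentially line for line: the flip isomorphism for (i), the identification of the Koszul complex of $L'_1$ with $(H_1\otimes\wedge L,\,d(f))\overline\otimes H_2$ for (ii), and the composition for (iii). The only difference is that where the paper invokes an external K\"unneth-type lemma ([8, Lemma~2.4]) to pass from exactness of $(H_1\otimes\wedge L,\,d(f))$ to exactness of its Hilbert tensor product with $H_2$, you supply that step directly by building a bounded contracting homotopy via the open mapping theorem and tensoring it with $\mathrm{Id}_{H_2}$---a correct, self-contained substitute for the cited lemma.
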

\begin{proof} It is clear that (iii) is a consequence of (i) and (ii). Let us prove (i).\par 
\indent If $f$ is a character of $L$, we consider $C_1$ (resp. $ C_2$), the Koszul
complex associated to the Lie algebra $L_1^{'}$ (resp. $L^{'}_2$) and $f$, then,
$$
C_1 = (H_1\overline\otimes H_2\otimes\wedge L_1^{'}, d^1(f)),
\hskip2cm
C_2 =(H_2\overline\otimes H_1\otimes \wedge L_2^{'}, d^2(f)),
$$
where the maps $d^i(f)$, i= 1,2, are as above.\par
\indent In addition, a elementary calculation shows that the map $\mu$,
$$
\mu_p\colon H_1\overline\otimes H_2\otimes\wedge^p L_1^{'}\to H_2\overline\otimes H_1\otimes\wedge^p L^{'}_2,
$$
$$
\mu_p(e_1\otimes e_2\langle x_1,\dots ,x_p\rangle) = e_2\otimes e_1\langle x_1, \ldots ,x_p\rangle,
$$
defines an isomorphism which commutes with $d^i(f)$, i=1,2, i.e., $\mu$ is
an isomorphism of chain complexes. Then,
$$
Sp(L_1^{'}, H_1\overline\otimes H_2) = Sp(L_2^{'}, H_2\overline\otimes H_1).
$$
\indent In order to verify (ii), let us consider the Koszul complex associated to $L$
and $f$, $C= (H_1\otimes\wedge ^p L, d(f))$, and $\tilde C $ the following chain complex,
$$
\tilde C = ((H_1\otimes\wedge L)\overline \otimes H_2, d(f)\otimes 1),
$$
where 1 denotes the identity map of $H_2$.\par
\indent We observe that if $\eta$ is the map defined by,
$$
\eta\colon (H_1\overline\otimes H_2)\otimes\wedge^p L_1^{'}\to (H_1\otimes\wedge^p L)\overline\otimes H_2,
$$
$$
\eta_p(e_1\otimes e_2\langle x_1,\dots ,x_p\rangle) = e_1\langle x_1, \ldots ,x_p\rangle\otimes e_2,
$$
then an easy  calculation shows that $\eta$ defines an isomorphism of chain complexes
between $C_1$ and $\tilde C$.\par
\indent Now,  if $f$ does not belong to $Sp(L, H_1)$, by [8, Lemma 2.4], the complex $\tilde C$
is exact. As $\eta$ is an isomorphism of chain complexes, if $C_1$ is exact, then $f$   
 does not belong
to $Sp( L_1^{'}, H_1\overline\otimes H_2)$.\par
\end{proof}

 \section{The Main Result}
\indent We now state our main result.\par
\begin{thm} Let $H_1$ and $H_2$  be two complex Hilbert spaces and $L_1$
and $L_2$  two complex solvable finite dimensional Lie algebras of operators
such that $L_i$ acts on $H_i$, respectively, i= 1,2. Let us consider the complex solvable finite dimensional
Lie algebra $L_1\times L_2$, which acts on $H_1\overline\otimes H_2$ and it is defined by,
$$
L_1\times L_2 =\{x_1\otimes 1+ 1\otimes x_2: x_i\in L_i,\hbox{  } i= 1,2\},
$$
where 1 denotes  the identity of the corresponding spaces.\par
Then,
$$
Sp(L_1\times L_2, H_1\overline\otimes H_2) = Sp(L_1, H_1)\times Sp(L_2, H_2),
$$
where, in the above equality, the set $Sp(L_1\times L_2,H_1\overline\otimes H_2)$
is considered as a subset of $L_1^*\times L_2^*$ under the natural identification
$\tilde{K}\colon (L_1\times L_2)^*\cong L_1^*\times L_2^*$ of Section 2. 
\end{thm}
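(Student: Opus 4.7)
The plan is to identify the Koszul complex computing $Sp(L_1\times L_2,H_1\overline\otimes H_2)$ at a character $f=(f_1,f_2)$ with the (completed) tensor product of the two individual Koszul complexes for $L_1$ at $f_1$ on $H_1$ and for $L_2$ at $f_2$ on $H_2$, and then to combine the projection property of [1] with a Künneth-type argument. Using $\wedge(L_1\times L_2)\cong \wedge L_1\otimes \wedge L_2$ together with the fact that every operator in $L_1'$ commutes with every operator in $L_2'$, the boundary $d(f)$ splits, up to sign, as $d^1(f_1)\otimes 1+(-1)^p\otimes d^2(f_2)$, so the complex is the total complex of the double complex
$$
K^{p,q}=(H_1\otimes\wedge^p L_1)\,\overline\otimes\,(H_2\otimes\wedge^q L_2),
$$
whose horizontal and vertical differentials are the individual Koszul differentials of $L_1$ and $L_2$ tensored with the identity of the opposite Hilbert-space factor.

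For the inclusion $\subseteq$, I apply the projection property to the ideals $L_1'$ and $L_2'$ of $L_1\times L_2$, which yields
$$
\pi_i\bigl(Sp(L_1\times L_2,H_1\overline\otimes H_2)\bigr)=Sp(L_i',H_1\overline\otimes H_2),\qquad i=1,2.
$$
Combined with Proposition 2.2, which asserts $Sp(L_i',H_1\overline\otimes H_2)\subseteq Sp(L_i,H_i)$, this shows that every element of $Sp(L_1\times L_2,H_1\overline\otimes H_2)$ sits in $Sp(L_1,H_1)\times Sp(L_2,H_2)$ via the identification $\tilde K$.

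For the inclusion $\supseteq$, I take $(f_1,f_2)$ with $f_i\in Sp(L_i,H_i)$ and must construct a non-trivial homology class in the total complex of $K^{*,*}$. Following the strategy of [3], I would select representatives $\xi_i\in\ker d^i(f_i)$ of non-zero classes in the homology of the individual Koszul complexes, chosen orthogonal to the respective images, and consider $\xi_1\otimes\xi_2$, which is automatically a cycle of the total complex. Assuming for contradiction that it is a boundary and unpacking the equation $\xi_1\otimes\xi_2=d^{\mathrm{tot}}\eta$ against orthonormal bases of the Hilbert-space factors of $K^{*,*}$ forces the relation $\|\xi_2\|^2\xi_1\in\mathrm{im}\,d^1(f_1)$, contradicting the orthogonal choice of $\xi_1$.

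The principal obstacle is that in the Hilbert-space setting the image of a Koszul differential need not be closed, so the ``harmonic'' representatives invoked above may fail to exist when the homology of one of the individual Koszul complexes is non-Hausdorff even though algebraically non-trivial. This is the delicate point that the paper's promised ``refinement'' of the Ceausescu--Vasilescu argument must address, either by reducing to the Hausdorff case via spectral-sequence bookkeeping on the filtration of $K^{*,*}$ by $L_1$-degree, or by a purely algebraic Künneth argument together with the observation, already implicit in [8, Lemma 2.4], that passage to the completion $\overline\otimes$ does not destroy the non-vanishing of total homology.
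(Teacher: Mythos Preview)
Your setup and the inclusion $\subseteq$ match the paper exactly: the identification of the Koszul complex for $L_1\times L_2$ with the completed tensor product of the individual complexes, and the use of the projection property together with Proposition~2.2.

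For the inclusion $\supseteq$, however, you have correctly diagnosed the gap in your own argument but not filled it, and the paper's resolution is different from both of your speculations. The paper does not look for actual harmonic representatives, nor does it invoke spectral sequences or the K\"unneth machinery of [8]. Instead it passes to the self-adjoint operators $T_i=d^{i}(f_i)+d^{i}(f_i)^*$ and $T_k=d^k(f)+d^k(f)^*$, using the fact (from [7, Lemma~3.1]) that a Koszul complex is exact if and only if the corresponding $T$ is invertible. If $f\notin Sp(L_1\times L_2,H_1\overline\otimes H_2)$ then $T_k$ is invertible; but if $f_i\in Sp(L_i,H_i)$ then each $T_i$ is a non-invertible self-adjoint operator, so $0$ lies in its spectrum and there exist \emph{approximate} null vectors: unit vectors $a_n^{i}$ with $T_i a_n^{i}\to 0$. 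Under the isometric identification $\psi$ one has $\psi T_k\psi^{-1}=T_1\otimes 1+\xi\otimes T_2$ with $\xi$ the unitary sign operator $(-1)^p$ on $H_1\otimes\wedge^p L_1$; hence the unit vectors $\psi^{-1}(a_n^{1}\otimes a_n^{2})$ satisfy $T_k\psi^{-1}(a_n^{1}\otimes a_n^{2})\to 0$, contradicting the invertibility of $T_k$. The key point is that approximate eigenvectors for the eigenvalue $0$ of a non-invertible self-adjoint operator always exist, regardless of whether the range of $d^{i}(f_i)$ is closed, so the non-Hausdorff obstacle you raised simply disappears.
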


\begin{proof}
\indent In order to prove that $Sp(L_1\times L_2,H_1\overline\otimes H_2)$ is contained 
in the cartesian product of $Sp(L_1,H_1)$ and $Sp(L_2,H_2)$, let $L_1^{'}$ (resp. $ L_2^{'}$) be the ideal of $L_1\times L_2$ defined
by $\{ x\otimes 1: x\in L_1\}$ (resp. $\{ 1\otimes y: y\in L_2\}$), where 1 is as above, see Section 2. Then, by the 
projection property of the joint spectrum, Proposition 1 and the identification $\tilde{K}$,

\begin{align*}
Sp(L_1\times L_2, H_1\overline\otimes H_2) & \subseteq Sp(L^{'}_1, H_1\overline\otimes H_2)\times Sp(L_2^{'}, H_1\overline\otimes H_2)\\
                                           & \subseteq Sp(L_1, H_1)\times Sp(L_2,H_2).\\ \end{align*}
\indent Let us prove the converse inclusion. We consider  the following preliminary facts first.\par
\indent Let $f_i$ be a character of $L_i$ and  $K_i$
 the Koszul complex of $L_i$ acting on $H_i$ associated to $f_i$, for  $i = 1,2 $,
$$
K_1 = (H_1\otimes \wedge L_1, d^1(f_1)), 
\hskip2cm
 K_2 = (H_2\otimes \wedge L_2, d^2(f_2)).
$$
\indent We observe that there is a natural identification between the spaces $ (H_1\otimes H_2)\otimes\wedge (L_1\times L_2)$
and $ (H_1\otimes \wedge L_1)\otimes (H_2\otimes\wedge L_2)$. If we denote by $\psi$
this identification, $\psi$ is the following map,
$$
\psi\colon (H_1\otimes H_2)\otimes \wedge (L_1\times L_2)\to (H_1\otimes \wedge L_1)\otimes (H_2\otimes \wedge L_2),
$$
$$
\psi(e_1\otimes e_2\langle x_1\wedge\dots \wedge x_p\wedge y_1\wedge\ldots \wedge y_q\rangle) = e_1\langle x_1\wedge\dots \wedge x_p\rangle\otimes e_2\langle y_1\wedge\dots \wedge y_q\rangle,
$$
where $ e_1\in H_1$, $e_2\in H_2$, $p\in [\![1, n]\!]$, $q \in [\![1, m]\!]$,  $n = \dim (L_1)$, 
and $m = \dim (L_2)$.\par
\indent As
$$
(H_1\overline \otimes H_2)\otimes\wedge ^k(L_1\times L_2) = \oplus_{p+q = k} (H_1\otimes\wedge^p L_1)\overline\otimes (H_2\otimes \wedge^q L_2),
$$
if we consider $H_i\otimes\wedge L_i$, i=1,2, $(H_1\overline\otimes H_2)\otimes\wedge (L_1\times L_2)$
and $(H_1\otimes\wedge L_1)\overline\otimes (H_2\otimes \wedge L_2)$ with their natural
structure of Hilbert spaces, a straightforward calculation shows that the 
map $\psi$ may be extended  to an isometric isomorphism from $ (H_1\overline\otimes H_2)\otimes\wedge (L_1\times L_2)$
onto  $(H_1\otimes\wedge L_1)\overline\otimes (H_2\otimes\wedge L_2)$.\par
\indent Besides, if $f$ is a character of $L_1\times L_2$ and if we  define $ f_j=f\circ i_j \in L_j^*$,
where $i_j\colon L_j\to L_1\times L_2$ are the canonical inclusions and $j= 1,2$, i.e., if we consider $f$ decomposed under the natural identification
$\tilde{K}\colon (L_1\times L_2)^*\cong L_1^*\times L_2^*$, if $K$ is the Koszul complex of $L_1\times L_2$
acting on $H_1\overline\otimes H_2$ associated to $f$,
$$
K =((H_1\overline\otimes H_2)\otimes\wedge (L_1\times L_2), d^k(f)),
$$
then an easy calculation shows that,
$$
\psi d^k(f) = (d^1(f_1)\otimes 1 + \xi\otimes d^2(f_2))\psi,
$$
where $\xi$ is the map,
$$
\xi\colon\oplus_{p=0}^n(H_1\otimes\wedge^p L_1)\to \oplus_{p=0}^n (H_1\otimes \wedge^p L_1),
$$
$$
\xi = \oplus_{p=0}^n (-1)^p,
$$
 1 is the identity map of $H_1$ and $d^k(f)$ is the boundary map of the Koszul complex $K$, 
equivalently, if we consider the algebraic tensor product of the complexes
$K_1$ and $K_2$, $K_1\otimes K_2$, and its natural completion $K_1\overline\otimes K_2$,
the map $\psi $ provides an isometric isomorphism of chain complexes, from $K$ onto
$K_1\overline\otimes K_2$. \par
\indent Moreover, if $T_i$, $i=1,2$, and $T_k$ are the maps,
$$
T_i= d^{i}(f_i)+{d^{i}(f_i)}^*, \hskip3cm T_k=d^k(f)+ {d^k(f)}^*,
$$
 as $\xi$ is a selfadjoint map, an easy calculation shows that,
$$
\psi T_k= (T_1\otimes 1+\xi\otimes T_2)\psi.
$$
\indent Let us return to the proof. If $f$ does not belong to $Sp(L_1\times L_2, H_1\overline\otimes H_2)$,
by [7, Lemma 3.1], the operator $T_k$ is an invertible map. On the other hand,
if $f_i$, $i=1,2$, belongs to $Sp(L_i,H_i)$, $i=1,2$, as $T_i$ is a selfadjoint operator, i= 1,2,
there exist by [7, Lemma 3.1] and by [4, Chapter II, Section 31, Theorem 2] two sequences of unit vectors, $(a_n^{i})_{n\in \Bbb N}$,
$i=1,2$, such that $\parallel a_n^{i}\parallel =1$,
$a_n^{i}\in H_i\otimes\wedge L_i$, and
$T_i(a_n^{i})\to 0$ ($n\to\infty$), for $i =1,2$. However, as $\psi$ and $\psi^{-1}$ are isometric isomorphisms, by
 elementary properties of the tensor product in Hilbert spaces, we have that: $\parallel a_n^1\otimes a_n^2\parallel=1$,
$\parallel\psi^{-1}(a_n^1\otimes a_n^2)\parallel=1$ and $T_k(\psi^{-1}(a_n^{1}\otimes a_n^2))\to 0$ ($n\to\infty$),
equivalently, by [4, Chapter II, Section 31, Theorem 2], $0\in  Sp(T_k)$, which is impossible by our assumption.\par  
\end{proof}
\bibliographystyle{amsplain}

\vskip.5cm

\noindent Enrico Boasso\par
\noindent E-mail address: enrico\_odisseo@yahoo.it

\end{document}